\numberwithin{equation}{section}
\newtheorem{theorem}{Theorem}[section]
\newtheorem{proposition}[theorem]{Proposition}
\newtheorem{lemma}[theorem]{Lemma}
\newtheorem{corollary}[theorem]{Corollary}
\newtheorem{step}{Step}
\theoremstyle{definition}
\newtheorem{definition}[theorem]{Definition}
\newtheorem{example}[theorem]{Example}
\theoremstyle{remark}
\newtheorem{remark}[theorem]{Remark}
\renewcommand{\ker}{\operatorname{Ker}}
\newcommand{\Z}{\mathbb{Z}}
\newcommand{\Q}{\mathbb{Q}}
\newcommand{\R}{\mathbb{R}}
\newcommand{\C}{\mathbb{C}}
\newcommand{\proj}{{\mathbb P}}
\newcommand{\HH}{\mathbb{H}}
\newcommand{\G}{\Gamma}
\newcommand{\LQ}{\Lambda_{\mathbb{Q}}}
\newcommand{\D}{\mathcal{D}}
\newcommand{\DL}{\mathcal{D}_{\Lambda}}
\newcommand{\Sp}{{\rm Sp}(\Lambda)}
\newcommand{\SL}{{\rm SL}_{2}}
\newcommand{\OL}{{\rm O}^{+}(\Lambda)}
\newcommand{\CHtor}{{\rm CH}_{0}(X^{tor})}
\newcommand{\CHtorQ}{{\rm CH}_{0}(X^{tor})_{\mathbb{Q}}}
\newcommand{\GIZ}{\Gamma(I)_{\mathbb{Z}}}
\newcommand{\UIQ}{U(I)_{\mathbb{Q}}}
\newcommand{\UIZ}{U(I)_{\mathbb{Z}}}
\newcommand{\GIZbar}{\overline{\Gamma(I)}_{\mathbb{Z}}}
\begin{document}

%%%%%%% Title %%%%%%%%%%%%%%%%%%%%%%%%
\title[]{Zero-cycles over zero-dimensional cusps}
\author[]{Shouhei Ma}
\thanks{Supported by KAKENHI 21H00971 and 20H00112.} 
\address{Department~of~Mathematics, Tokyo~Institute~of~Technology, Tokyo 152-8551, Japan}
\email{ma@math.titech.ac.jp}
\subjclass[2020]{14C25, 11F46, 11F41, 11F55}
\keywords{$0$-cycle, modular variety, 
toroidal compactification, Manin-Drinfeld theorem, Chow group} 
%\dedicatory{}

\begin{abstract}
We prove that all points of a toroidal compactification lying over $0$-dimensional cusps 
are rationally equivalent in the integral Chow group 
for most classical modular varieties 
(Siegel, Hilbert, orthogonal, Hermitian, quaternionic). 
This gives a generalization, and even strengthening, of the Manin-Drinfeld theorem in higher dimension 
from the viewpoint of algebraic cycles. 
The same result no longer holds for Picard modular varieties, 
but for them we prove that the difference of any two "special" boundary points, 
which are dense in the boundary, is torsion. 
\end{abstract} 

\maketitle

%%%%Introduction
\section{Introduction}\label{sec: intro}

In \cite{Manin}, \cite{Dr}, Manin and Drinfeld discovered that 
the difference of two cusps of a congruence modular curve is torsion in its Picard group. 
We wish to study higher dimensional analogue of the Manin-Drinfeld theorem 
from the viewpoint of algebraic cycles. 
In \cite{Ma} the Manin-Drinfeld theorem was generalized in the form of 
rational equivalence of two cusps in the rational Chow group of 
the Baily-Borel compactification of some modular varieties. 
The Baily-Borel compactification is however highly singular at the boundary in general, 
and it has been recognized that the study of Chow groups of singular varieties 
is more difficult than for nonsingular ones.  

Toroidal compactification \cite{AMRT} provides an explicit desingularization 
of the Baily-Borel compactification when the arithmetic group is neat. 
Even when not smooth, its singularities are relatively mild, and its boundary is of codimension $1$. 
%A natural question is then, what kind of boundary cycles we should look at. 
It should be noticed, however, that 
a naive analogue of the Manin-Drinfeld theorem no longer holds for the boundary \textit{divisors} 
of toroidal compactification. 
From the viewpoint of algebraic cycles, one of the next cycles to look at will be $0$-cycles. 
In this paper we study some boundary $0$-cycles of toroidal compactification. 
For classical modular varieties with tube domain model, 
we prove that they are rationally equivalent in the \textit{integral} Chow group, 
hence define a distinguished integral Chow class.   
The viewpoint of $0$-cycles thus provides one direction in which 
the Manin-Drinfeld theorem can be generalized and even strengthened. 

%They thus give rise to a distinguished element in the integral Chow group of the toroidal compactification.  

To be more specific, we consider the following series of modular varieties. 
\begin{itemize}
\item Siegel modular variety $\mathcal{D}/{\G}$ 
attached to a finite-index subgroup ${\G}$ of the symplectic group ${\Sp}$ 
of a symplectic lattice $\Lambda$ of rank $>2$. 
\item Hilbert modular variety ${\HH}^{n}/{\G}$ 
attached to a finite-index subgroup ${\G}$ of ${\SL}(\mathcal{O}_{F})$  
for a totally real number field $F$ of degree $n>1$. 
\item Orthogonal modular variety $\mathcal{D}/{\G}$ 
attached to a finite-index subgroup ${\G}$ of the orthogonal group ${\OL}$ of 
an integral quadratic form $\Lambda$ of signature $(2, n)$ with $n>2$. 
%We assume that either $n\geq 3$ or $\Lambda$ has Witt index $1$. 
\item We also cover Hermitian modular varieties attached to the unitary groups ${\rm U}(n, n)$ and 
quaternionic modular varieties attached to the groups ${\rm SO}^{\ast}(4n)$, both with $n>1$. 
\end{itemize}

Except for the Hilbert case, ${\D}$ is the classical irreducible Hermitian symmetric domain of type 
$III, IV, I_{n,n}, II_{2n}$ respectively. 
According to \cite{PS}, they are characterized by the existence of tube domain model. 
In the first two cases ${\G}$ is automatically a congruence subgroup (\cite{Me}, \cite{BLS}, \cite{Se}). 

Let $X^{bb}$ be the Baily-Borel compactification of $Y=\mathcal{D}/{\G}$, 
and $X^{tor}$ be the toroidal compactification of $Y$ defined by a ${\G}$-admissible collection of fans $\Sigma$. 
(We suppress the dependence on $\Sigma$ in the notation.) 
We may assume that $\Sigma$ is chosen so that $X^{tor}$ is projective (\cite{AMRT}). 
We have a natural morphism $\pi \colon X^{tor}\to X^{bb}$. 
In the Hilbert case, $X^{bb}$ has only $0$-dimensional cusps. 

Our main result is the following. 

\begin{theorem}\label{main} 
Let $Y$ be one of the modular varieties as above and 
$X^{tor}$ be a projective toroidal compactification of $Y$. 
Let $P, Q\in X^{tor}$ be two points such that $\pi(P), \pi(Q)$ are $0$-dimensional cusps (which may be distinct). 
Then $[P]=[Q]$ in the integral Chow group ${\CHtor}$ of $X^{tor}$. 
\end{theorem}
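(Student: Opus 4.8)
The plan is to prove Theorem~\ref{main} in two stages: first to show that the entire fiber $\pi^{-1}(F)$ over a single $0$-dimensional cusp $F$ collapses to one class in $\CHtor$, and then to identify these classes across the finitely many cusps. The first stage is where the tube domain hypothesis enters decisively. Near a $0$-dimensional cusp $F$ corresponding to an isotropic sublattice $I$, the relevant unipotent radical coincides with its center $U(I)$, and the partial toroidal compactification is modelled on the torus embedding $T_{\Sigma}$ of the torus $T=U(I)_{\C}/\UIZ$ attached to the $\GIZbar$-admissible fan $\Sigma$ supported on the rational closure of the positivity cone $C(I)\subset U(I)_{\R}$. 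Thus $\pi^{-1}(F)$ is \emph{purely toric}: it is the image of the toric boundary $T_{\Sigma}\setminus T$ under the quotient by $\GIZbar$, with no abelian part. This is precisely what fails for Picard varieties, where the fiber acquires an abelian factor and only torsion relations survive.

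First I would treat a single cusp. The $0$-dimensional torus orbits of $T_{\Sigma}$ correspond to the top-dimensional cones of $\Sigma$, and whenever two such cones share an interior wall, the closure of the intervening $1$-dimensional orbit is a complete rational curve $\proj^{1}$ contained in $T_{\Sigma}\setminus T$. Since $C(I)$ is convex and $\Sigma$ subdivides it, the adjacency graph of top-dimensional cones is connected, so all torus-fixed points are joined by chains of such $\proj^{1}$'s and are therefore mutually rationally equivalent. An arbitrary point of $\pi^{-1}(F)$ lies in some orbit $O(\tau)$; within the toric orbit closure $\overline{O(\tau)}$ it degenerates, along a one-parameter subgroup, to a torus-fixed point, so it too is rationally equivalent to the common class. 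Pushing these equivalences forward along the finite-to-one quotient map by $\GIZbar$ into $X^{tor}$ (which is compatible with $\pi$) shows that all points of $\pi^{-1}(F)$ define a single class $[F]\in\CHtor$.

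It remains to prove $[F]=[F']$ for two distinct $0$-dimensional cusps, and this is the main obstacle. A weaker statement, that $[F]-[F']$ is \emph{torsion}, should follow from the higher-dimensional Manin--Drinfeld theorem of \cite{Ma}, applied either along an auxiliary modular curve through both cusps or via the incidence of isotropic subspaces; but Theorem~\ref{main} asserts exact equality in the \emph{integral} group $\CHtor$, so the essential difficulty is to remove this torsion ambiguity. When the two cusps are linked through a common rank-$2$ isotropic subspace, the corresponding toric charts glue along the boundary over the intervening $1$-dimensional cusp, and I would connect $[F]$ and $[F']$ by a chain of complete toric rational curves running through the deepest, again purely toric, strata of this gluing, thereby obtaining an honest integral equivalence. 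In the Hilbert case, however, the boundary cycles over different cusps are disjoint and no such boundary chain exists; there one must instead exploit the toric flexibility from the first stage to manufacture genuine $\proj^{1}$'s, or explicit rational functions whose divisors are supported over the cusps, realizing $[F]-[F']$ as a principal $0$-cycle. Carrying out this upgrade from torsion to exact equality uniformly across the Siegel, Hilbert, orthogonal, Hermitian and quaternionic families, and in particular reconciling the connected-boundary mechanism with the disjoint-boundary Hilbert case, is where I expect the real work to lie.
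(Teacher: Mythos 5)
Your Stage 1 is sound and coincides with the paper's Step \ref{step2}: over a single $0$-dimensional cusp the fiber of $\pi$ is a connected union of toric varieties, so any two of its points are joined by chains of rational curves and give one class in $\CHtor$. Your observation that the Manin--Drinfeld theorem, applied along modular curves joining two cusps, gives that $[F]-[F']$ is \emph{torsion} also matches the paper's Step \ref{step3}. The genuine gap is the step you explicitly leave open (``where I expect the real work to lie''): upgrading torsion to equality in the integral Chow group. Moreover, the two mechanisms you propose for this upgrade cannot work. Fibers of $\pi$ over distinct $0$-dimensional cusps are disjoint closed sets, and the intervening boundary strata over $1$-dimensional cusps are \emph{not} ``purely toric'': they fiber over (compactifications of) $1$-dimensional cusps, which are modular curves of in general positive genus, with fibers involving elliptic/abelian parts (cf.\ Remark \ref{1-cycle}). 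Since $\proj^{1}$ admits no nonconstant map to a positive-genus curve, every rational curve in such a stratum is contracted by $\pi$ to a point, so no chain of rational curves in the boundary can pass from $\pi^{-1}(F)$ to $\pi^{-1}(F')$. In the Hilbert case the boundary is literally disconnected, and ``explicit rational functions whose divisors are supported over the cusps'' is precisely what Manin--Drinfeld says does \emph{not} exist on a positive-genus modular curve (the difference of cusps is a nonzero torsion class there); nothing in your sketch produces the required curves in $X^{tor}$ carrying such functions.

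The paper closes this gap by an argument of a completely different nature, which your proposal misses entirely. After reducing to $\G$ neat and $X^{tor}$ smooth projective (Step \ref{step1}, needed for what follows), it invokes Roitman's theorem \cite{Ro2}: the Albanese map $A_{0}(X^{tor})\to{\rm Alb}(X^{tor})$ is injective on torsion. Combined with Proposition \ref{prop: Alb trivial} --- ${\rm Alb}(X^{tor})$ is trivial, because $\G$ is an irreducible higher-rank lattice, so $\G^{ab}$ is finite by the Margulis normal subgroup theorem \cite{Margu}, and $\G$ surjects onto $\pi_{1}(X^{tor})$ --- this shows $A_{0}(X^{tor})$ is torsion-free, so the torsion relation from Manin--Drinfeld becomes the equality $[P]=[Q]$ on the nose. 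This abstract step is exactly why the paper describes the resulting integral equivalence as ``somewhat mysterious'': no explicit chain of curves realizing it is constructed, and your plan of manufacturing one head-on has no evident route to success. As written, your argument proves only the torsion statement, i.e.\ the analogue of Theorem \ref{thm: Picard}, not Theorem \ref{main}.
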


%\begin{theorem}[Siegel case]\label{thm: Siegel}
%Let $X^{tor}$ be a toroidal compactification of a Siegel modular variety. 
%Let $P, Q\in X^{tor}$ be two points such that $\pi(P), \pi(Q)$ are $0$-dimensional cusps (which may be distinct). 
%Then $[P]=[Q]$ in the integral Chow group ${\CHtor}$ of $X^{tor}$. 
%\end{theorem}

%\begin{theorem}[Hilbert case]\label{thm: Hilbert}
%Let $X^{tor}$ be a toroidal compactification of a Hilbert modular variety $Y$. 
%Let $P, Q$ be two points of $X^{tor}-Y$. 
%Then $[P]=[Q]$ in ${\CHtor}$. 
%\end{theorem}

%\begin{theorem}[Orthogonal case]\label{thm: orthogonal}
%Let $X^{tor}$ be a toroidal compactification of an orthogonal modular variety. 
%Suppose that either $n\geq 3$ or $\Lambda$ has Witt index $1$. 
%Let $P, Q\in X^{tor}$ be two points such that $\pi(P), \pi(Q)$ are $0$-dimensional cusps. 
%Then $[P]=[Q]$ in ${\CHtor}$. 
%\end{theorem}

Theorem \ref{main} is in contrast to the case of modular curves, 
namely the original Manin-Drinfeld theorem. 
In that case, the difference of two cusps is nontrivial unless the modular curve has genus $0$, 
and the calculation of its order in the Picard group has been the subject of many works on 
modular units and cuspidal divisor class groups. 
Thus, in this aspect, the situation gets simpler in higher dimension. 

As a consequence of Theorem \ref{main}, we obtain a distinguished element of ${\CHtor}$ 
that will play a fundamental role in the Chow ring of $X^{tor}$. 
In a few examples where $X^{tor}$ is a $K3$ Hilbert modular surface, 
this Chow class coincides with the Beauville-Voisin class \cite{BV} of the $K3$ surface (Example \ref{K3}). 
A similar coincidence also holds for a $K3$ Picard modular surface (Example \ref{K3 II}). 

The proof of Theorem \ref{main} is relatively simple. 
In the Siegel case, it proceeds as follows. 
We first show that $[P]-[Q]$ is torsion in ${\CHtor}$. 
When $\pi(P)=\pi(Q)$, this is a consequence of the structure of the boundary. 
When $\pi(P)\ne \pi(Q)$, we apply the Manin-Drinfeld theorem to a chain of certain modular curves in $Y$ 
joining $\pi(P)$ and $\pi(Q)$. 
Finally, we conclude that $[P]=[Q]$ 
as a consequence of the Roitman theorem \cite{Ro2} and the Margulis normal subgroup theorem \cite{Margu}, 
two deep theorems from different branches of Mathematics. 
We can say that it is this last step 
that makes the difference between the curve case and the higher dimensional case. 
At the same time, this step leaves it somewhat mysterious 
\textit{how} $P$ and $Q$ are equivalent in the integral Chow group. 

The proof in the Hilbert case is similar. 
It is also possible to give a similar proof in the orthogonal case, 
but instead of such a repetition, 
we \textit{reduce} the proof to the cases of Siegel $3$-folds and Hilbert modular surfaces already proved. 
This argument enables us to prove Theorem \ref{main} in the orthogonal case without assuming ${\G}$ congruence subgroup. 
The cases of ${\rm U}(n, n)$ and ${\rm SO}^{\ast}(4n)$ are similar. 
All classical domains for which our proof of Theorem \ref{main} may work are covered. 

The case excluded from the orthogonal case is $n=2$ with Witt index $2$. 
This is essentially the case of product of two modular curves. 
Theorem \ref{main} does not hold for them. 
Indeed, their Albanese map is the product of the Abel-Jacobi maps of each curve, 
and so injective if both curves have genus $>0$. 
Thus the assumption in the orthogonal case would be best possible.

Theorem \ref{main} does not hold for Picard modular varieties attached to the unitary groups ${\rm U}(1, n)$. 
Perhaps this case could be more interesting. 
For simplicity we assume ${\G}$ neat (and congruence). 
The toroidal compactification of a Picard modular variety is canonical and has 
disjoint union of abelian varieties as the boundary. 
In general, its Albanese map does not contract the boundary to one point. 
Nevertheless we can prove the following analogue of Theorem \ref{main}. 
We define \textit{special} boundary points using the structure of toroidal compactification (Definition \ref{def: special point}). 
In each component of the boundary, such points form the set of torsion points 
of the abelian variety if we take one of them as the origin. 

\begin{theorem}\label{thm: Picard}
Let $X^{tor}$ be the toroidal compactification of a Picard modular variety $Y$. 
Let $P, Q\in X^{tor}-Y$ be two special boundary points. 
Then $[P]-[Q]$ is torsion in ${\CHtor}$. 
\end{theorem}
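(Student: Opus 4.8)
The plan is to reduce Theorem \ref{thm: Picard} to the comparison of special points, using that each component of $X^{tor}-Y$ is an abelian variety whose special points are, by Definition \ref{def: special point}, exactly its torsion points once an origin is fixed. First I would treat the case in which $P$ and $Q$ lie on the same boundary abelian variety $A$, with inclusion $\iota\colon A\hookrightarrow X^{tor}$. Fixing an origin so that $P,Q$ are torsion of common order $m$, I claim $[P]-[Q]$ is already torsion in ${\rm CH}_{0}(A)$: writing $[k]$ for multiplication by $k$ on $A$, one has $[m]_{\ast}([P]-[Q])=[mP]-[mQ]=0$, while in Beauville's eigenspace decomposition of ${\rm CH}_{0}(A)_{\Q}$ the operator $[m]_{\ast}$ acts on the weight-$s$ summand $(s\ge 1)$ of the degree-zero part by the nonzero scalar $m^{s}$. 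Since $[P]-[Q]$ has degree zero, every one of its summands must vanish, so $[P]-[Q]$ is torsion in ${\rm CH}_{0}(A)$. Applying $\iota_{\ast}$ and using $\iota_{\ast}[P]=[P]$, $\iota_{\ast}[Q]=[Q]$ in $X^{tor}$ then gives that $[P]-[Q]$ is torsion in ${\CHtor}$. This settles the case $\pi(P)=\pi(Q)$, and reduces the general case to comparing the distinguished origins $O_{1}\in A_{1}$ and $O_{2}\in A_{2}$ of two distinct boundary components.

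For two distinct cusps I would argue as in the Siegel case sketched in the introduction. The $0$-dimensional cusps $\pi(P)=c_{1}$ and $\pi(Q)=c_{2}$ can be joined inside $Y$ by a chain of modular curves arising from rank-one sub-data, namely sub-balls $\mathbb{B}^{1}\subset\mathbb{B}^{n}$ stabilized by suitable ${\rm U}(1,1)\subset{\rm U}(1,n)$, whose two cusps lie over $0$-dimensional cusps of $X^{bb}$. Applying the classical Manin-Drinfeld theorem \cite{Manin,Dr} to the smooth compactification $\bar{C}$ of each such curve shows that the difference of its two cusps is torsion in ${\rm Pic}(\bar{C})$. Pushing forward along $\bar{C}\to X^{tor}$ and telescoping along the chain, one obtains that $[O_{1}]-[O_{2}]$ is torsion in ${\CHtor}$, provided the intersection of each $\bar{C}$ with the relevant boundary component $A_{i}$ is the special origin $O_{i}$. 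Combining this with the first step completes the proof.

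The main obstacle is this second step, and concretely two points within it. One must produce enough ${\rm U}(1,1)$-curves to connect any two cusps by a chain, which uses that $\G$ is congruence together with the arithmetic of the isotropic vectors defining the cusps; and one must identify the intersection of each closure $\bar{C}$ with the boundary abelian variety $A_{i}$ as a special point. The latter is the delicate local computation: the toroidal chart near a cusp realizes $A_{i}$ as the quotient of the linear part $\C^{n-1}$ by a lattice, and one has to check that a curve coming from a ${\rm U}(1,1)$ sub-datum degenerates to the image of $0\in\C^{n-1}$, hence to a torsion (special) point. This is precisely where the restriction to special, rather than arbitrary, boundary points enters, and is why Theorem \ref{thm: Picard} asserts only a torsion relation rather than the equality of Theorem \ref{main}.
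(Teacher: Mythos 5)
Your outline follows the same skeleton as the paper's proof: first the case $\pi(P)=\pi(Q)$, then distinct cusps via a modular curve and the Manin-Drinfeld theorem, pushed forward to $X^{tor}$. In the first step your argument differs from the paper's but is correct: the paper uses that $\Delta(I)\simeq A(I)$ is isogenous to a power of a CM elliptic curve $E$, produces a homomorphism $f\colon E\to A(I)$ with $f(o_E)=Q$ and $f(p)=P$ for $p$ torsion, and pushes forward the torsion class $[p]-[o_E]\in{\rm Pic}^0(E)$; you instead invoke Beauville's decomposition of ${\rm CH}_0(A)_{\Q}$ and the eigenvalues of $[m]_{\ast}$. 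Both establish the standard fact that differences of torsion points are torsion in ${\rm CH}_0$ of an abelian variety, and yours has the mild advantage of not using the CM structure at all. One point you should make explicit: the origin you fix must itself be a \emph{special} point; only then does Definition \ref{def: special point} identify the special points with the torsion points.

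The second step is where your proposal is incomplete, and the two ``obstacles'' you name are exactly what the paper settles, both more cheaply than you anticipate. First, no chain is needed: in signature $(1,n)$ the Witt index is $1$, so two distinct isotropic $K$-lines $I,J$ can never be orthogonal (their span would be a $2$-dimensional totally isotropic subspace); hence $W=\langle I,J\rangle_{\C}$ is nondegenerate of signature $(1,1)$ and the single curve coming from ${\D}\cap\proj W$ joins $p_I$ and $p_J$ directly. Second, the specialness of the limit points is not a delicate local computation but is built into the construction (this is Lemma \ref{lem: special point}): $\Delta(I)=H(I)/V(I)_{\Z}$, and ${\D}\cap\proj W$ is precisely the fiber of the projection $\phi$ over $[W]\in H(I)$, so the closure of the curve meets $\Delta(I)$ in the image of the point $[W]$, which is $K$-rational, hence special by definition. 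Third --- and this is the one genuine flaw in your plan --- you should not require the curve to pass through the chosen origins $O_1,O_2$: a single plane $W$ cannot in general be arranged to hit two prescribed special points on two prescribed boundary components, and the requirement is unnecessary. The correct bookkeeping, which is the paper's, is: the curve limits to \emph{some} special points $P'\in\Delta(I)$ and $Q'\in\Delta(J)$; Manin-Drinfeld pushed forward gives that $[P']-[Q']$ is torsion; and your own first step gives that $[P]-[P']$ and $[Q]-[Q']$ are torsion. With these repairs your proposal coincides with the paper's proof.
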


By Theorem \ref{thm: Picard}, 
the Chow class of special boundary points is uniquely determined in the rational Chow group ${\CHtorQ}$.  
By our geometric characterization of special boundary points, 
any intersection $0$-cycle of a modular curve and a boundary divisor is contained in 
the span of this Chow class in ${\CHtorQ}$ (Corollary \ref{cor: Chow class special point}). 

Theorem \ref{thm: Picard} leads to a (surviving) Chow-theoretic analogue of 
cuspidal divisor class groups for Picard modular varieties, 
as the subgroup of ${\CHtor}$ generated by the differences of two special boundary points 
(\S \ref{ssec: cusp group}). 
By Theorem \ref{thm: Picard} and Roitman's theorem \cite{Ro2}, 
to understand this group is reduced to 
the study of the configuration of the Albanese image of the boundary abelian varieties.

This paper is organized as follows. 
In \S \ref{sec: Siegel} we prove Theorem \ref{main} for Siegel modular varieties. 
In \S \ref{sec: Hilbert} we prove Theorem \ref{main} for Hilbert modular varieties. 
In \S \ref{sec: orthogonal} we prove Theorem \ref{main} for orthogonal modular varieties, 
and give an outline for the remaining cases. 
In \S \ref{sec: Picard} we prove Theorem \ref{thm: Picard}.

%%%%Siegel
\section{Siegel modular varieties}\label{sec: Siegel}

In this section, after recollection in \S \ref{ssec: Siegel recall}, 
we prove Theorem \ref{main} for Siegel modular varieties in \S \ref{ssec: Siegel proof}. 

\subsection{Siegel modular varieties}\label{ssec: Siegel recall}

We first recall Siegel modular varieties and their Satake and toroidal compactifications. 
Let $\Lambda$ be a free abelian group of rank $2g>2$ 
equipped with a nondegenerate symplectic form 
$\Lambda \times \Lambda \to {\Z}$. 
The Hermitian symmetric domain ${\D}={\DL}$ attached to $\Lambda$ 
is defined as the open locus of the Lagrangian Grassmannian  
consisting of $g$-dimensional isotropic subspaces $W$ of $\Lambda_{{\C}}$ 
such that the associated Hermitian form on $W$ is positive-definite. 
Let ${\G}$ be a finite-index subgroup of the symplectic group ${\Sp}$. 
By \cite{Me}, \cite{BLS}, ${\G}$ is a congruence subgroup. 
The quotient 
$Y={\D}/{\G}$ 
is a quasi-projective variety, called a Siegel modular variety. 

The Satake compactification of $Y$, denoted by $X^{bb}$ for consistency with other sections, 
is defined as the quotient by ${\G}$ of the union of ${\D}$ 
and its rational boundary components, equipped with the Satake topology. 
It is normal and projective (\cite{BB}). 
Rational boundary components (cusps) of ${\D}$ correspond to 
isotropic subspaces $I$ of ${\LQ}$. 
In particular, when $I$ is maximal, i.e., $\dim I = g$, 
the corresponding cusp is $0$-dimensional, 
given by the point $[I_{{\C}}]$ of the Lagrangian Grassmannian. 
%We call (the closure of) the image of a rational boundary component in $X^{bb}$ a \textit{cusp}. 

A toroidal compactification $X^{tor}=X_{\Gamma}^{\Sigma}$ of $Y$ (\cite{AMRT}) 
is constructed by choosing a ${\G}$-admissible collection of fans 
$\Sigma=(\Sigma_{I})$, 
one for each isotropic subspace $I$ of ${\LQ}$. 
Each fan $\Sigma_{I}$ is a polyhedral cone decomposition of 
the cone of positive semidefinite forms with rational kernel in ${\rm Sym}^{2}I_{{\R}}$. 
This defines a partial compactification $({\D}/{\UIZ})^{\Sigma_{I}}$ of ${\D}/{\UIZ}$, 
where ${\UIZ}={\G}\cap {\UIQ}$ and ${\UIQ}\simeq {\rm Sym}^{2}I$ is 
the center of the unipotent part of the stabilizer $N(I)_{{\Q}}$ of $I$ in ${\rm Sp}(\Lambda_{{\Q}})$. 
We write  
${\GIZ}={\G}\cap N(I)_{{\Q}}$ 
and ${\GIZbar}={\GIZ}/{\UIZ}$. 
Then $X^{tor}$ is constructed as a gluing of $Y$ and 
the quotients $({\D}/{\UIZ})^{\Sigma_{I}}/{\GIZbar}$. 
$X^{tor}$ is a compact Moishezon space with a natural morphism 
$\pi : X^{tor}\to X^{bb}$. 
We may choose $\Sigma$ so that $X^{tor}$ is projective (\cite{AMRT}). 

We are interested in the case $\dim I = g$. 
Let $p\in X^{bb}$ be the $0$-dimensional cusp corresponding to $I$. 
In this case ${\D}/{\UIZ}$ is embedded in the algebraic torus 
$T_{I}={\rm Sym}^{2}I_{{\C}}/{\UIZ}$ 
%as the locus of symmetric forms with positive-definite imaginary part, 
by the tube domain realization of ${\D}$ with respect to $I$ 
(= isomorphism with the Siegel upper half space). 
The partial compactification 
${\D}/{\UIZ}\hookrightarrow ({\D}/{\UIZ})^{\Sigma_{I}}$ 
is an open set of the torus embedding 
$T_{I}\hookrightarrow T_{I}^{\Sigma_{I}}$ 
defined by the fan $\Sigma_{I}$. 
If $\sigma\in \Sigma_{I}$ is a cone 
whose relative interior consists of positive-definite forms, 
the corresponding torus orbit 
${\rm orb}(\sigma)\subset T_{I}^{\sigma} \subset T_{I}^{\Sigma_{I}}$ 
is contained in $({\D}/{\UIZ})^{\Sigma_{I}}$. 
Then the locus of boundary points of $({\D}/{\UIZ})^{\Sigma_{I}}$ which lie over the cusp $p$ 
is the union of these torus orbits (\cite{AMRT} p.164).  
This shows that 
\begin{equation}\label{eqn: fiber 0dim cusp}
\pi^{-1}(p) = (\sqcup_{\sigma}{\rm orb}(\sigma))/{\GIZbar}, 
\end{equation}
where $\sigma$ runs over cones with positive-definite relative interior, 
and $\sqcup_{\sigma}{\rm orb}(\sigma)$ is the union of infinitely many toric varieties 
whose configuration is determined by $\Sigma_{I}$.

\subsection{Proof of Theorem \ref{main}}\label{ssec: Siegel proof}

We now prove Theorem \ref{main} for Siegel modular varieties. 
Recall that the given toroidal compactification $X^{tor}=X_{{\G}}^{\Sigma}$ 
is assumed to be projective. 
We begin with the following reduction, which will be necessary in Step \ref{step4}. 

\begin{step}\label{step1} 
We may assume that ${\G}$ is neat and $X^{tor}$ is smooth and projective. 
\end{step}

\begin{proof}
We take a neat subgroup $\Gamma'$ of ${\G}$ of finite index. 
Then $\Sigma$ is also $\Gamma'$-admisible, 
so we have the toroidal compactification 
$X_{\Gamma'}^{\Sigma}$ of ${\D}/\Gamma'$ 
defined by $\Sigma$. 
Since we have a finite morphism 
$X_{\Gamma'}^{\Sigma} \to X_{\Gamma}^{\Sigma}$ 
and $X_{\Gamma}^{\Sigma}$ is projective, 
$X_{\Gamma'}^{\Sigma}$ is projective too. 
Next, by \cite{AMRT} Corollary III.7.6, 
we can take a $\Gamma'$-admissible subdivision 
$\Sigma'$ of $\Sigma$ such that 
$X_{\Gamma'}^{\Sigma'} \to X_{\Gamma'}^{\Sigma}$ 
is projective and $X_{\Gamma'}^{\Sigma'}$ is smooth. 
Then $X_{\Gamma'}^{\Sigma'}$ is also projective. 

For any point of $X_{\Gamma}^{\Sigma}$ lying over a $0$-dimensional cusp of $X_{{\G}}^{bb}$, 
the points in its inverse image in $X_{\Gamma'}^{\Sigma'}$ lie over $0$-dimensional cusps of 
$X_{\Gamma'}^{bb}$. 
Therefore, if we could prove Theorem \ref{main} for $X_{\Gamma'}^{\Sigma'}$, 
the assertion for $X_{\Gamma}^{\Sigma}$ follows by pushforward. 
In what follows, we rewrite $(\Gamma', \Sigma')$ as $({\G}, \Sigma)$. 
\end{proof}

Let $P, Q$ be two points of $X^{tor}$ such that 
$p=\pi(P)$ and $q=\pi(Q)$ are $0$-dimensional cusps of $X^{bb}$. 
We first show that $[P]-[Q]\in {\CHtor}$ is torsion in two steps. 

\begin{step}\label{step2}
If $p=q$, we have $[P]=[Q]$ in ${\CHtor}$. 
\end{step}

\begin{proof}
By the description \eqref{eqn: fiber 0dim cusp} of the $\pi$-fiber over $p$, 
we see that $\pi^{-1}(p)$ is a connected configuration of finitely many toric varieties. 
This shows that we can join $P$ and $Q$ by a chain of rational curves inside $\pi^{-1}(p)$. 
\end{proof}

\begin{step}\label{step3}
Even when $p \ne q$, $[P]-[Q]\in {\CHtor}$ is torsion. 
\end{step}

\begin{proof}
We take maximal isotropic subspaces $I$, $I'$ of ${\LQ}$ 
which correspond to $p, q$ respectively. 
We first consider the case when the pairing $(I, I')$ is nondegenerate. 
In that case, we take splittings 
$I=I_1 \oplus \cdots \oplus I_g$ and $I'=I'_1 \oplus \cdots \oplus I'_g$ 
such that 
$(I_i, I'_j)=0$ if $i\ne j$ and $(I_i, I'_i)={\Q}$. 
If we put $\Lambda_{i}=I_i \oplus I'_i$,  
we have the orthogonal splitting 
${\LQ}=\Lambda_{1} \oplus \cdots \oplus \Lambda_{g}$ 
as a symplectic space. 
Note that we have an isomorphism 
$\Lambda_{i}\simeq {\Q}^2$ of symplectic space 
which sends $I_{i}, I'_{i}$ to ${\Q}(1, 0)$, ${\Q}(0, 1)$ respectively. 
This defines an embedding of the upper half plane 
${\HH}=\mathcal{D}_{{\Q}^2}\subset {\proj}^{1}$ as 
\begin{equation*}
\mathcal{D}_{{\Q}^2} \hookrightarrow 
\mathcal{D}_{\Lambda_{1}}\times \cdots \times \mathcal{D}_{\Lambda_{g}} \hookrightarrow {\DL}, 
\qquad V\mapsto V\oplus \cdots \oplus V. 
\end{equation*}
Since ${\G}$ is a congruence subgroup, 
this induces a finite morphism 
$Y(N)\to Y$ 
from the principal congruence modular curve $Y(N)={\HH}/\Gamma(N)$ of some level $N$. 
If $X(N)$ is the compactification of $Y(N)$, 
this extends to a morphism $X(N)\to X^{bb}$ 
which sends the cusps $0, i\infty$ of $X(N)$ to $p, q\in X^{bb}$ respectively. 
Furthermore, since $X(N)$ is a curve, 
this lifts to a morphism $f: X(N)\to X^{tor}$ to the toroidal compactification. 
We have $[P]=[f(0)]$ and $[Q]=[f(i\infty)]$ by Step \ref{step2}. 
By the Manin-Drinfeld theorem,  $[0]-[i\infty]$ is torsion in ${\rm CH}_{0}(X(N))$. 
Sending this equality by $f_{\ast}$, we find that 
\begin{equation*}
[P]-[Q] =  [f(0)] - [f(i\infty)] = f_{\ast}([0] - [i\infty ]) 
\end{equation*}
is torsion in ${\CHtor}$. 
This proves the assertion when the pairing $(I, I')$ is nondegenerate. 

When $(I, I')$ is degenerate, 
we can find another maximal isotropic subspace $I''$ such that 
both $(I, I'')$ and $(I', I'')$ are nondegenerate. 
If we take a point $R\in X^{tor}$ lying over the $I''$-cusp, 
we can use the above case twice to see that 
$[P]-[Q]=([P]-[R])+([R]-[Q])$ is torsion in ${\CHtor}$. 
\end{proof}

\begin{step}\label{step4}
We have $[P]=[Q]$ in ${\CHtor}$. 
\end{step}

\begin{proof}
%By what has been proved, the element $[P]-[Q]$ of ${\CHtor}$ is torsion. 
Let $A_{0}(X^{tor})$ be the subgroup of ${\CHtor}$ of degree $0$ cycles. 
By the theorem of Roitman \cite{Ro2}, the Albanese map 
$A_{0}(X^{tor})\to {\rm Alb}(X^{tor})$ 
is injective on the torsion part of $A_{0}(X^{tor})$. 
On the other hand, the Albanese variety ${\rm Alb}(X^{tor})$ is trivial by the following 
Proposition \ref{prop: Alb trivial}. 
This implies that $A_{0}(X^{tor})$ is torsion-free, 
and so $[P]-[Q]=0$ in ${\CHtor}$. 
This completes the proof of Theorem \ref{main} for Siegel modular varieties. 
\end{proof}

In the final step we used the following general consequence of 
the Margulis normal subgroup theorem (\cite{Margu} p.4). 

\begin{proposition}\label{prop: Alb trivial}
Let ${\D}=G/K$ be a Hermitian symmetric domain and 
${\G}< G$ be an arithmetic group. 
Assume that the Lie group $G$ has real rank $\geq 2$ and ${\G}$ is irreducible. 
Then the Albanese variety of a smooth projective model of ${\D}/{\G}$ is trivial. 
\end{proposition}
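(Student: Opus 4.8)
The plan is to reduce the triviality of $\mathrm{Alb}(X)$, for $X$ a smooth projective model of ${\D}/{\G}$, to the finiteness of the abelianization $\G^{ab}=\G/[\G,\G]$, which is exactly the content supplied by the Margulis normal subgroup theorem. Recall that for a smooth projective variety $X$ one has $\dim\mathrm{Alb}(X)=h^{1,0}(X)=\tfrac12 b_{1}(X)$, so the Albanese variety is trivial if and only if $H^{1}(X,\Q)=0$, equivalently $H_{1}(X,\Z)=\pi_{1}(X)^{ab}$ is finite. Since the Albanese variety is a birational invariant among smooth projective varieties, it suffices to verify this for one convenient model.

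First I would dispose of torsion in $\G$. Choosing a neat normal subgroup $\G'\triangleleft\G$ of finite index (still an irreducible lattice in the same higher-rank $G$), one obtains a dominant generically finite rational map $X'\dashrightarrow X$ between smooth projective models of ${\D}/\G'$ and ${\D}/\G$. For such a map of smooth projective varieties the pullback of holomorphic $1$-forms $H^{0}(X,\Omega^{1})\to H^{0}(X',\Omega^{1})$ is injective, so by duality the induced map $\mathrm{Alb}(X')\to\mathrm{Alb}(X)$ is surjective. Hence it is enough to treat the neat case, i.e. to show $\mathrm{Alb}(X')=0$.

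Assume then that $\G$ is neat. Then $\G$ acts freely on the contractible domain ${\D}$, so $U={\D}/\G$ is a $K(\G,1)$ and $\pi_{1}(U)\cong\G$. Embedding the smooth quasi-projective $U$ as a dense Zariski-open subset of a smooth projective variety $X$ (possible in characteristic $0$), the inclusion induces a surjection $\pi_{1}(U)\twoheadrightarrow\pi_{1}(X)$, and therefore a surjection $\G^{ab}\twoheadrightarrow H_{1}(X,\Z)$. Thus $H_{1}(X,\Z)$ is a quotient of $\G^{ab}$, and it remains to prove that $\G^{ab}$ is finite.

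This last point is where the real content lies, and I expect it to be the main obstacle, everything else being formal topology and functoriality of the Albanese. The subgroup $[\G,\G]$ is normal, so by the Margulis normal subgroup theorem (applicable because $G$ has real rank $\geq2$ and $\G$ is irreducible) it is either finite or of finite index. It cannot be finite: otherwise $\G$ would be virtually abelian, contradicting that $\G$ is Zariski-dense in the semisimple group $G$ by Borel density. Hence $[\G,\G]$ has finite index and $\G^{ab}$ is finite, so $H_{1}(X,\Z)$ is finite, $H^{1}(X,\Q)=0$, and $\mathrm{Alb}(X)=0$. The rank-$\geq2$ hypothesis is essential precisely here, which is consistent with the failure of the analogous conclusion in the rank-one Picard case handled separately.
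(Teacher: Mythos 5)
Your proof is correct, and its engine is the same as the paper's: a surjection $\Gamma^{ab}\twoheadrightarrow H_{1}(X,\Z)$ combined with finiteness of $\Gamma^{ab}$, which is exactly where Margulis enters. The genuine difference is in how the surjection onto $H_{1}$ is produced when ${\G}$ has torsion. The paper quotes Sankaran \cite{Sa} for the existence of a surjection ${\G}\twoheadrightarrow \pi_{1}(\tilde{X})$ for an \emph{arbitrary} arithmetic group, so no reduction is needed and the proof takes four lines. You instead pass to a neat normal subgroup ${\G}'\triangleleft {\G}$ of finite index, where $\pi_{1}({\D}/{\G}')\cong {\G}'$ is elementary (free action on a contractible domain) and surjectivity of $\pi_{1}(U)\to\pi_{1}(X)$ for a smooth compactification is the standard real-codimension-two fact; you then transfer the vanishing back to ${\G}$ via surjectivity of $\mathrm{Alb}(X')\to\mathrm{Alb}(X)$ for a dominant generically finite rational map, i.e.\ injectivity of pullback on holomorphic $1$-forms. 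This buys self-containedness (no appeal to \cite{Sa}) at the cost of an extra Albanese-functoriality step, whereas the paper's citation buys brevity and covers the non-neat case in one stroke. You also spell out the deduction of finiteness of $\Gamma^{ab}$ from the normal subgroup theorem --- the dichotomy for $[{\G},{\G}]$, with the finite case excluded by Borel density --- where the paper cites this corollary directly from \cite{Margu} p.4; your argument there is sound, including the mildly delicate point that a finitely generated group with finite commutator subgroup is virtually abelian. Both routes are valid proofs of the proposition.
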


\begin{proof}
Let $\tilde{X}$ be a smooth projective model of ${\D}/{\G}$. 
We have a surjective homomorphism 
${\G}\twoheadrightarrow \pi_{1}(\tilde{X})$ (see, e.g., \cite{Sa}). 
This induces a surjective homomorphism  
$\Gamma^{ab}\twoheadrightarrow H_{1}(\tilde{X}, {\Z})$ 
between their abelianizations. 
Then $\Gamma^{ab}$ is finite by the Margulis theorem, 
so $H_{1}(\tilde{X}, {\Z})$ is finite. 
\end{proof}

For Siegel modular varieties, 
Proposition \ref{prop: Alb trivial} also follows from Weissauer's theorem \cite{We}. 
For orthogonal modular varieties, Proposition \ref{prop: Alb trivial} is also proved in \cite{BLMM} when ${\G}$ is congruence,  
using $L^{2}$-cohomology and Lie algebra cohomology. 
This can also be proved in an elementary way by induction on the dimension, 
using restriction to sub orthogonal modular varieties and starting from Hilbert modular surfaces.

\begin{remark}\label{1-cycle}
A similar argument, 
using finiteness of the Mordell-Weil groups of elliptic modular surfaces \cite{Shioda}, 
yields a weaker result for boundary $1$-cycles in the case of Siegel $3$-folds: 
if $Z_{1}, Z_{2}\subset X^{tor}$ are irreducible $1$-cycles such that 
$\pi(Z_{1}), \pi(Z_{2})$ are $1$-dimensional cusps, 
then 
$aZ_{1}\sim bZ_{2}+F$ 
for some $a, b \in {\Z}_{>0}$ 
and a $1$-cycle $F$ whose support is contracted by $\pi$ to points. 
\end{remark} 

\begin{remark}\label{rmk: divisor}
The situation is totally different for the boundary divisors.  
They tend to be linearly independent already in the level of cohomology. 
See \cite{HW} Proposition 4.3 for the boundary divisors over cusps of corank $1$. 
Also, subdivision of fan leads to blow-up (for ${\G}$ neat), 
which just adds new components to $H^{2}$. 
\end{remark}

%%%%Hilbert
\section{Hilbert modular varieties}\label{sec: Hilbert}

Let $F$ be a totally real number field of degree $n>1$. 
The group ${\SL}(F)$ acts on ${\HH}^{n}$ through the natural embedding 
${\SL}(F)\hookrightarrow {\SL}({\R})^{n}$. 
Let ${\G}$ be a finite-index subgroup of ${\SL}(\mathcal{O}_{F})$. 
By \cite{Se}, ${\G}$ is a congruence subgroup. 
The quotient $Y={\HH}^{n}/{\G}$ is a quasi-projective variety called a Hilbert modular variety. 
The Baily-Borel compactification $Y\hookrightarrow X^{bb}$ is obtained by 
adding finitely many points ($0$-dimensional cusps) corresponding to ${\proj}^{1}(F)/{\G}$, 
with the Satake topology (\cite{BB}, \cite{vdG}). 
A toroidal compactification $X^{tor}$ of $Y$ can be constructed by choosing 
a collection of suitable polyhedral cone decompositions of the cone of totally positive elements 
of $F\otimes_{{\Q}}{\R}\simeq {\R}^{n}$, 
one for each cusp independently (\cite{AMRT}, \cite{vdG}). 
We have a natural morphism $X^{tor}\to X^{bb}$. 
Hence $X^{tor}-Y$ is a divisor lying over the finitely many points $X^{bb}-Y$.

\subsection{Proof of Theorem \ref{main}}\label{ssec: Hilbert proof}  

The proof of Theorem \ref{main} for Hilbert modular  varieties 
is similar (and simpler) to the case of Siegel modular varieties. 
Instead of repetition, 
we just indicate that the ingredients in the argument of \S \ref{ssec: Siegel proof}  
have counterparts in the present case. 
We may assume as before that ${\G}$ is neat and $X^{tor}$ is smooth and projective. 

\begin{itemize}
\item The boundary divisor of $X^{tor}$ over a cusp is again a connected configuration of toric varieties. 
\item The diagonal embedding ${\HH}\hookrightarrow {\HH}^{n}$ gives 
a congruence modular curve joining the cusps $0=(0, \cdots, 0)$ and $i\infty=(i\infty, \cdots, i\infty)$. 
Since ${\SL}(F)$ acts transitively on ${\proj}^{1}(F)\times {\proj}^{1}(F) \backslash \textrm{diagonal}$, 
we can translate the diagonal embedding to obtain 
a congruence modular curve joining any two distinct cusps. 
\item Proposition \ref{prop: Alb trivial} applies to the Hilbert modular varieties, 
so the Albanese variety of $X^{tor}$ is trivial. 
In fact, this follows from a more elementary argument (see \cite{vdG} p.82 and p.18).  
\end{itemize}
With these ingredients, 
Theorem \ref{main} for Hilbert modular varieties 
can be proved by the same argument as in the case of Siegel modular varieties. 
\qed 

\begin{example}\label{K3}
In a few examples in $n=2$, the desingularization of $X^{tor}$ is a blown-up $K3$ surface, 
and none of the boundary curves are contracted in the minimal model $X_{K3}$ 
(see \cite{vdG} Chapter VII, \S 3 -- \S 4). 
Since the boundary curves are rational, the unique class in ${\rm CH}_{0}(X_{K3})$ given by 
the boundary points coincides with the Beauville-Voisin class \cite{BV} of the $K3$ surface $X_{K3}$. 
By \cite{BV}, the span of the Beauville-Voisin class contains 
the image of the intersection product ${\rm Pic}(X_{K3})\otimes {\rm Pic}(X_{K3})\to {\rm CH}_{0}(X_{K3})$. 
This provides rational equivalence between boundary $0$-cycles and 
special $0$-cycles in $Y$ obtained as intersection of two modular curves. 
\end{example}

\subsection{Higher modular units}

Let $Y\hookrightarrow X^{tor}$ be a Hilbert modular variety and its toroidal compactification as above. 
Theorem \ref{main} leads to a higher dimensional analogue of modular units (cf.~\cite{KL}) 
as higher Chow cycles on $Y$. 
We write $X=X^{tor}$ and $\partial X= X-Y$. 
We consider a part of the localization exact sequence of higher Chow groups for 
$Y\hookrightarrow X \hookleftarrow \partial X$: 
\begin{equation*}
{\rm CH}_0(X, 1) \stackrel{i^{\ast}}{\to} {\rm CH}_0(Y, 1) \stackrel{\delta}{\to} 
{\rm CH}_0(\partial X) \stackrel{j_{\ast}}{\to} {\rm CH}_0(X). 
\end{equation*}
Here 
$i\colon Y\hookrightarrow X$ and $j\colon \partial X \hookrightarrow X$ 
are natural embeddings and $\delta$ is the connecting map. 
Let $p \ne q \in X^{bb}$ be two cusps. 
We take $P, Q\in X$ such that $\pi(P)=p$ and $\pi(Q)=q$.  
We can view $P$, $Q$ also as $0$-cycles on $\partial X$. 

\begin{corollary}
There exists a nonzero element $Z_{p,q}\in {\rm CH}_{0}(Y, 1)$, 
uniquely determined from $(p, q)$ up to $i^{\ast}{\rm CH}_{0}(X, 1)$, 
such that 
\begin{equation*}
\delta(Z_{p,q})=[P]-[Q] \in {\rm CH}_{0}(\partial X). 
\end{equation*}
\end{corollary}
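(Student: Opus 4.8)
The plan is to read off the corollary directly from the localization exact sequence displayed just above it, using Theorem~\ref{main} as the crucial input. The key exact piece is
\begin{equation*}
{\rm CH}_0(X, 1) \stackrel{i^{\ast}}{\to} {\rm CH}_0(Y, 1) \stackrel{\delta}{\to}
{\rm CH}_0(\partial X) \stackrel{j_{\ast}}{\to} {\rm CH}_0(X),
\end{equation*}
and the whole statement is essentially a translation of the containment $[P]-[Q]\in\ker(j_{\ast})$ into the language of higher Chow cycles via exactness. First I would observe that $j_{\ast}([P]-[Q])$ is, by definition of the pushforward along $j\colon \partial X\hookrightarrow X$, exactly the class $[P]-[Q]$ computed in ${\rm CH}_{0}(X)={\CHtor}$. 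By Theorem~\ref{main}, since $\pi(P)=p$ and $\pi(Q)=q$ are both $0$-dimensional cusps, we have $[P]=[Q]$ in ${\CHtor}$, hence $j_{\ast}([P]-[Q])=0$.

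Next I would invoke exactness at ${\rm CH}_{0}(\partial X)$: the kernel of $j_{\ast}$ equals the image of the connecting map $\delta$. Since $[P]-[Q]\in\ker(j_{\ast})=\im(\delta)$, there exists an element $Z_{p,q}\in{\rm CH}_{0}(Y,1)$ with $\delta(Z_{p,q})=[P]-[Q]$, which is the desired existence statement. For uniqueness, exactness at ${\rm CH}_{0}(Y,1)$ gives $\ker(\delta)=\im(i^{\ast})=i^{\ast}{\rm CH}_{0}(X,1)$; therefore any two preimages of $[P]-[Q]$ under $\delta$ differ by an element of $i^{\ast}{\rm CH}_{0}(X,1)$, so $Z_{p,q}$ is well-defined up to this subgroup. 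This accounts for every clause of the statement except nonvanishing.

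The one point requiring a genuine argument — and the main (mild) obstacle — is that $Z_{p,q}$ is \emph{nonzero}. I would argue this by contradiction: if $Z_{p,q}=0$ then $[P]-[Q]=\delta(Z_{p,q})=0$ already in ${\rm CH}_{0}(\partial X)$, i.e. $P$ and $Q$ would be rationally equivalent as $0$-cycles on the boundary divisor $\partial X$ itself. To rule this out one must show that two points lying over \emph{distinct} $0$-dimensional cusps $p\ne q$ are not rationally equivalent on $\partial X$. This is where the hypothesis $p\ne q$ is used essentially: the boundary $\partial X$ is the disjoint union of the toric configurations $\pi^{-1}(p')$ over the various cusps $p'$ (each a connected configuration of toric varieties by the analogue of Step~\ref{step2}), so $P$ and $Q$ sit in different connected components of $\partial X$. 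Since rational equivalence cannot move a $0$-cycle between distinct connected components, the degree-on-each-component invariant distinguishes $[P]$ from $[Q]$ in ${\rm CH}_{0}(\partial X)$, forcing $[P]-[Q]\ne 0$ there and hence $Z_{p,q}\ne 0$. Thus the nonvanishing is exactly the statement that the Manin--Drinfeld-type rational equivalence $[P]=[Q]$ genuinely requires the ambient variety $X$ and does not already hold on the boundary.
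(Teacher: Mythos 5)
Your proposal is correct and follows essentially the same route as the paper: Theorem \ref{main} gives $j_{\ast}([P]-[Q])=0$, exactness of the localization sequence yields existence and uniqueness up to $i^{\ast}{\rm CH}_{0}(X,1)$, and nonvanishing follows because $P$ and $Q$ lie in different connected components of $\partial X$. The only clause you gloss over is that $Z_{p,q}$ depends only on the pair of cusps $(p,q)$ and not on the chosen points $P$, $Q$ lying over them; this follows from the connectedness fact you already cite, namely that points over the same cusp are joined by chains of rational curves inside the toric configuration and hence are rationally equivalent already in ${\rm CH}_{0}(\partial X)$.
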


\begin{proof}
Since $j_{\ast}([P]-[Q])=0$ in ${\rm CH}_0(X)$ by Theorem \ref{main}, 
the existence and uniqueness of $Z_{p,q}$ follow from the exactness of the localization sequence. 
It is independent of the choice of $P$ and $Q$ because 
$[P]=[P']\in {\rm CH}_0(\partial X)$ if $\pi(P)=\pi(P')$ 
(and similarly for $Q$). 
Since $P$ and $Q$ belong to different connected components of $\partial X$, 
we have $[P]\ne [Q]$ in  ${\rm CH}_{0}(\partial X)$ 
and so $Z_{p,q}\ne 0$. 
\end{proof}

The higher Chow cycle $Z_{p,q}$ can be thought of as an analogue of modular units. 
Since it is left somewhat mysterious \textit{how} $P$ and $Q$ are rationally equivalent, 
so is $Z_{p,q}$. 
If $p, q, r\in X^{bb}$ are three cusps, then 
\begin{equation*}
Z_{p,q} + Z_{q,r} \equiv Z_{p,r}  \mod  i^{\ast}{\rm CH}_{0}(X, 1). 
\end{equation*}
Therefore the element  
$Z_{p,q} + Z_{q,r} - Z_{p,r}$ of 
${\rm CH}_{0}(Y, 1)$ provides an element of ${\rm CH}_{0}(X, 1)$.

%%%%Orthogonal
\section{Orthogonal modular varieties}\label{sec: orthogonal}

In \S \ref{ssec: orthogonal proof} we prove Theorem \ref{main} for orthogonal modular varieties, 
after recollection in \S \ref{ssec: orthogonal recall}. 
In \S \ref{ssec: other case} we give an outline for the remaining cases 
(Hermitian and quaternionic modular varieties).

\subsection{Orthogonal modular varieties}\label{ssec: orthogonal recall}

We begin by recalling orthogonal modular varieties and their Baily-Borel and toroidal compactifications. 
In this section we let $\Lambda$ be a free abelian group of rank $2+n$ 
equipped with a nondegenerate symmetric bilinear form $\Lambda \times \Lambda \to {\Z}$ of signature $(2, n)$. 
Let $Q\subset {\proj}\Lambda_{{\C}}$ be the isotropic quadric defined by $(\omega, \omega)=0$. 
The Hermitian symmetric domain ${\D}$ attached to $\Lambda$ 
is one of the two connected components of the open set of 
$Q$ defined by $(\omega, \bar{\omega})>0$.  
Let ${\OL}$ be the index $\leq 2$ subgroup of the orthogonal group ${\rm O}(\Lambda)$ preserving the component ${\D}$. 
Let ${\G}$ be a subgroup of ${\OL}$ of finite index. 
The quotient $Y={\D}/{\G}$ is a quasi-projective variety. 

The domain ${\D}$ has $0$-dimensional and $1$-dimensional rational boundary components, 
corresponding to $1$-dimensional and $2$-dimensional isotropic subspaces $I$ of ${\LQ}$ respectively. 
When $\dim I=1$, the corresponding boundary point is the point $[I_{{\C}}]$ of $Q$. 
The Baily-Borel compactification $X^{bb}$ of $Y$ is defined as the quotient by ${\G}$ 
of the union of ${\D}$ and the rational boundary components, equipped with the Satake topology. 
%It is normal and projective (\cite{BB}). 

A toroidal compactification $X^{tor}$ of $Y$ is construced by choosing 
a finite collection of suitable fans $\Sigma=(\Sigma_{I})$, 
one for each ${\G}$-equivalence class of isotropic lines $I$ in ${\LQ}$ independently. 
%No choice is required for isotropic planes: it is canonical. 
For each $I$, the tube domain realization (= the linear projection from the boundary point) 
defines an isomorphism of ${\D}/{\UIZ}$ with an open set of the algebraic torus 
$T_{I}=U(I)_{{\C}}/{\UIZ}$. 
Here ${\UIZ}={\G}\cap {\UIQ}$ and 
${\UIQ}$ is the unipotent part of the stabilizer of $I$ in ${\rm O}^{+}({\LQ})$. 
${\UIQ}$ is canonically isomorphic to the quadratic space $(I^{\perp}/I)\otimes I$. 
%Then ${\D}/{\UIZ}$ is identified with the locus in $T_{I}$ 
%consisting of points whose imaginary part is in the positive cone of $U(I)_{{\R}}$. 
The fan $\Sigma_{I}$ gives a polyhedral cone decomposition of 
the extended positive cone of $U(I)_{{\R}}$. 
It defines the partial compactification 
${\D}/{\UIZ}\hookrightarrow ({\D}/{\UIZ})^{\Sigma_{I}}$ 
inside the torus embedding 
$T_{I}\hookrightarrow T_{I}^{\Sigma_{I}}$. 
If $\sigma\in \Sigma_{I}$ is a cone which is not an isotropic ray, 
its relative interior is contained in the positive cone, 
and the corresponding torus orbit ${\rm orb}(\sigma)$ in $T_{I}^{\Sigma_{I}}$ 
is also contained in $({\D}/{\UIZ})^{\Sigma_{I}}$. 
The union of these torus orbits ${\rm orb}(\sigma)$ 
is the locus of boundary points of $({\D}/{\UIZ})^{\Sigma_{I}}$ 
which lie over the $I$-cusp (\cite{AMRT} p.164). 

We also have the partial compactifications at the $1$-dimensional cusps, 
which are canonical and require no choice. 
The toroidal compactification $X^{tor}$ is then defined as 
a gluing of $Y$ and natural quotients of these partial compactifications (\cite{AMRT}). 
$X^{tor}$ is a compact Moishezon space with a morphism $\pi:X^{tor}\to X^{bb}$.

\subsection{Proof of Theorem \ref{main}}\label{ssec: orthogonal proof}

We now prove Theorem \ref{main} in the orthogonal case. 
As before we may assume that ${\G}$ is neat and $X^{tor}$ is smooth and projective. 
Let $P, Q$ be two points of $X^{tor}$ such that 
$p=\pi(P)$, $q=\pi(Q)$ are $0$-dimensional cusps, 
corresponding to isotropic lines $I_{1}, I_{2}$ of ${\LQ}$ respectively. 
When $p=q$, we have $[P]=[Q]$ in ${\CHtor}$ 
because $\pi^{-1}(p)$ is a connected union of finitely many toric varieties 
by our description of the partial compactifications. 
In the general case $p \ne q$, it is possible to run a similar argument as before 
when ${\G}$ is a congruence subgroup. 
But instead of such a repetition, we give another proof: 
we reduce the proof to the cases of Siegel $3$-folds and Hilbert modular surfaces 
which were already proved. 
We do not assume that ${\G}$ is a congruence subgroup. 

%By scaling we may assume that $\Lambda$ is even. 

We first consider the case $\Lambda$ has Witt index $2$. 
%By assumption we have $n \geq 3$. 
Then we can find a subspace 
$\Lambda'_{{\Q}}\subset {\LQ}$ containing $I_{1}, I_{2}$ such that 
$\Lambda'_{{\Q}}\simeq 2U_{{\Q}}\oplus \langle -2t \rangle$ for some $t>0$. 
Here $U_{{\Q}}$ is the hyperbolic plane. 
We have $\mathcal{D}_{\Lambda'}\subset {\DL}$, 
and this induces a morphism 
$Y'=\mathcal{D}_{\Lambda'}/\Gamma' \to Y$ 
for some arithmetic subgroup $\Gamma'$ of ${\rm SO}^{+}(\Lambda'_{{\Q}})$. 
This extends to a morphism between the Baily-Borel compactifications, say 
$f \colon (X')^{bb}\to X^{bb}$, 
and also between toroidal compactifications, say 
$f \colon (X')^{tor}\to X^{tor}$,  
for a suitable choice of fans $\Sigma'$ for $\Gamma'$ (cf.~\cite{Ha}). 
Then $I_{1}, I_{2}$ give $0$-dimensional cusps $p', q'$ of $(X')^{bb}$ such that 
$f(p')=p$ and $f(q')=q$ respectively. 
Now $Y'$ is isomorphic to a Siegel modular $3$-fold. 
Therefore we can apply Theorem \ref{main} to $(X')^{tor}$ which is already proved in \S \ref{ssec: Siegel proof}.
This shows that 
$[P']=[Q']$ in ${\rm CH}_{0}((X')^{tor})$ for any points $P', Q'\in (X')^{tor}$ lying over $p', q'$ respectively. 
Sending this equality by $f_{\ast}$, we see that 
\begin{equation*}
[P]=[f(P')]=[f(Q')]=[Q] 
\end{equation*}
in ${\CHtor}$. 
%This proves Theorem \ref{main} for orthogonal modular varieties in the case $\Lambda$ has Witt index $2$. 

In the case $\Lambda$ has Witt index $1$, 
we can instead take a subspace $\Lambda'_{{\Q}}\supset I_{1}, I_{2}$ 
isometric to $U_{{\Q}}\oplus K$ with $K$ anisotropic of signature $(1, 1)$. 
Then $\mathcal{D}_{\Lambda'}/\Gamma'$ for a suitable $\Gamma' < {\rm SO}^{+}(\Lambda'_{{\Q}})$ 
is isomorphic to a Hilbert modular surface, 
for which Theorem \ref{main} is proved in \S \ref{ssec: Hilbert proof}. 
This finishes the proof of Theorem \ref{main} for orthogonal modular varieties.  
\qed

\subsection{Other modular varieties}\label{ssec: other case}

Theorem \ref{main} also holds in the following cases: 
\begin{itemize}
\item The unitary group ${\rm U}(n, n)$ of a Hermitian form of signature $(n, n)$ and Witt index $n$ with $n>1$ 
over an imaginary quadratic field. 
The domain ${\D}$ is of type $I_{n,n}$ (Hermitian upper half space), 
and its compact dual is the Grassmannian $\textrm{G}(n, 2n)$.  
\item The quaternionic group, usually written as ${\rm SO}^{\ast}(4n)$ or sometimes ${\rm Sp}(2n, \mathbb{H})$, 
attached to a skew-Hermitian space of dimension $2n$ and Witt index $n$ with $n>1$ 
over a definite quaternion algebra over ${\Q}$ (cf.~\cite{Kr}). 
The domain ${\D}$ is of type $II_{2n}$ (quaternion upper half space), 
and its compact dual is the orthogonal Grassmannian $\textrm{OG}(2n, 4n)$. 
\end{itemize}
Since we are considering $0$-dimensional cusps, 
the (skew-)Hermitian form must have maximal Witt index, and hence unique up to isometry. 
In both cases, $0$-dimensional cusps correspond to rational maximal isotropic subspaces.  
The Siegel domain realization at a $0$-dimensional cusp is a tube domain 
(Hermitian/quaternion upper half space), 
so the boundary locus of toroidal compactification over that cusp is a union of toric varieties. 
Proposition \ref{prop: Alb trivial} also holds in these cases. 
For joining two $0$-dimensional cusps, we can use (a chain of translates of) diagonal embeddings 
$I_{1,1}\hookrightarrow I_{n,n}$ and $II_{2}\hookrightarrow II_{2n}$ respectively. 
In the spirit of \S \ref{ssec: orthogonal proof}, 
we can use 
\begin{equation*}
III_{n}\hookrightarrow I_{n,n}\hookrightarrow II_{2n}. 
\end{equation*}
We can also use 
$IV_{4}\simeq I_{2,2}\hookrightarrow I_{2n,2n}$ 
and 
$IV_{6}\simeq II_{4}\hookrightarrow II_{4n}$ 
when the dimension is even. 

This exhausts all possible classical domains for which our proof of Theorem \ref{main} may work. 
In general, 
Step \ref{step2} in \S \ref{ssec: Siegel proof} 
works if the unipotent radical of the stabilizer of the $0$-dimensional cusp is abelian. 
Step \ref{step4} works if the Lie group has real rank $>1$ 
and the arithmetic group is irreducible. 
It is not clear if Step \ref{step3} can be done systematically in the general framework of Shimura varieties. 
%In general a chain of several modular curves whose cusps hit to $0$-dimensional cusps will be required. 

\section{Picard modular varieties}\label{sec: Picard}

As explained in \S \ref{sec: intro}, 
Theorem \ref{main} does not hold for Picard modular varieties. 
Besides explicit example (e.g., \cite{Hi}, \cite{Ho}), 
we can tell which part of the argument in \S \ref{ssec: Siegel proof} does not work: 
Step \ref{step2}, as the boundary divisors are abelian varieties; 
Step \ref{step4}, as the Albanese variety of a Picard modular variety is in general nontrivial. 
In this section we show that 
nevertheless we can formulate and prove an analogous result (Theorem \ref{thm: Picard}). 

In \S \ref{ssec: Picard recollect} we recall Picard modular varieties and their compactifications. 
In \S \ref{ssec: special point} we define \textit{special} boundary points. 
In \S \ref{ssec: proof Picard} we prove Theorem \ref{thm: Picard}. 
In \S \ref{ssec: cusp group} we explain that Theorem \ref{thm: Picard} leads to 
a $0$-cycle analogue of cuspidal class groups for Picard modular varieties.

\subsection{Picard modular varieties}\label{ssec: Picard recollect}

Let $K$ be an imaginary quadratic field. 
Let $V$ be a $K$-linear space of dimension $n+1$ 
endowed with a Hermitian form $V\times V \to K$ of signature $(1, n)$. 
We write $V_{{\C}}=V\otimes_{K}{\C}=V\otimes_{{\Q}}{\R}$. 
The space 
\begin{equation*}
{\D} = \{ {\C}v \in {\proj}V_{{\C}} \: | \: (v, v)>0 \} 
\end{equation*}
is isomorphic to the unit ball in ${\C}^n$. 
Let ${\rm U}(V)$ be the unitary group of $V$ and 
${\G}$ be a congruence subgroup of ${\rm U}(V)$. 
We assume, for simplicity of exposition, that ${\G}$ is neat. 
The quotient $Y={\D}/{\G}$ is a smooth quasi-projective variety called a Picard modular variety. 
(It is also sometimes called a \textit{ball quotient}, especially when ${\G}$ is not necessarily arithmetic.) 
Rational boundary points of ${\D}$ correspond to isotropic $K$-lines $I$ in $V$. 
Explicitly, to $I$ corresponds the point $p_{I}=[I_{{\C}}]$ of ${\proj}V_{{\C}}$. 
The Baily-Borel compactification $X^{bb}$ of $Y$ is obtained by adding finitely many points 
corresponding to the ${\G}$-equivalence classes of these points $p_I$.  

Let $I\subset V$ be an isotropic $K$-line. 
We write 
$V(I)=(I^{\perp}/I)\otimes_{K} \bar{I}$ and 
$H(I)={\proj}(V/I)_{{\C}}-{\proj}(I^{\perp}/I)_{{\C}}$. 
Then $H(I)$ is an affine space isomorphic to $V(I)_{{\C}}$ 
(with no specified base point). 
Consider the projection 
\begin{equation*}
\phi \colon {\proj}V_{{\C}} - p_{I} \to {\proj}(V/I)_{{\C}} 
\end{equation*}
from the point $p_{I}$. 
Each point of ${\proj}(V/I)_{{\C}}$, say $[W]$, 
corresponds to a ${\C}$-plane $W\subset V_{{\C}}$ containing $I_{{\C}}$, 
and the $\phi$-fiber over $[W]$ is the affine line ${\proj}W-p_{I}$. 
When $[W]\in H(I)$, the Hermitian form on $W$ is nondegenerate of signature $(1, 1)$. 
This shows that $\phi|_{{\D}}$ realizes ${\D}$ as 
a fibration of upper half planes over $H(I)$, 
with the fiber over $[W]$ being the positive ball ${\D}\cap {\proj}W$ in ${\proj}W$. 
This is the Siegel domain realization of ${\D}$ with respect to $I$. 

Let $N(I)_{{\Q}}\subset {\rm U}(V)$ be the stabilizer of $I$ in ${\rm U}(V)$. 
We write 
\begin{equation*}
W(I)_{{\Q}}={\ker}(N(I)_{{\Q}} \to {\rm GL}(I)\times {\rm U}(I^{\perp}/I)), 
\end{equation*}
\begin{equation*}
U(I)_{{\Q}}={\ker}(N(I)_{{\Q}} \to {\rm GL}(I^{\perp})). 
\end{equation*}
Then $U(I)_{{\Q}}\simeq {\Q}$ is isomorphic to the purely imaginary part of $I\otimes_K \bar{I} \simeq K$.  
Explicitly, if we choose $v\in I$ and isotropic $v'\in V$ with $(v, v')=1$, 
a purely imaginary $\alpha\in K$ corresponds to the isometry 
sending $v'\mapsto v'+\alpha v$ and identity on $I^{\perp}$. 
The group $W(I)_{{\Q}}$ is the unipotent part of $N(I)_{{\Q}}$. 
It is a Heisenberg group with center $U(I)_{{\Q}}$ and 
$W(I)_{{\Q}}/U(I)_{{\Q}}\simeq V(I)$ as ${\Q}$-linear space. 
%so we have the exact sequence 
%\begin{equation*}
%0 \to U(I) \to W(I) \to V(I) \to 0. 
%\end{equation*}
The group $U(I)_{{\Q}}$ acts on $H(I)$ trivially and on the fibers of ${\D}\to H(I)$ by translation. 
The quotient $W(I)_{{\Q}}/U(I)_{{\Q}}$ acts on the affine space $H(I)$ as the translation by $V(I)$. 

Let now ${\G}<{\rm U}(V)$ be a neat congruence subgroup. 
We write 
\begin{equation*}
W(I)_{{\Z}} = {\G}\cap W(I)_{{\Q}}, \quad  
U(I)_{{\Z}} = {\G}\cap U(I)_{{\Q}}, \quad  
V(I)_{{\Z}} = W(I)_{{\Z}}/U(I)_{{\Z}}. 
\end{equation*}
Since ${\G}$ is neat, $W(I)_{{\Z}}$ coincides with ${\G}\cap N(I)_{{\Q}}$. 
$U(I)_{{\Z}}\simeq {\Z}$ is a lattice in $U(I)_{{\Q}}\simeq {\Q}$ and 
$V(I)_{{\Z}}$ is a full lattice in $V(I)$, namely $V(I)_{{\Z}}\otimes_{{\Z}}{\Q}=V(I)$. 
The quotient 
\begin{equation*}
A(I) = V(I)_{{\C}}/V(I)_{{\Z}} 
\end{equation*}
is an abelian variety isogenous to 
the $(n-1)$-fold self product of an elliptic curve with complex multiplication by $K$. 

The toroidal compactification $X^{tor}$ of $Y$ is a canonically constructed 
smooth projective variety with a morphism $\pi\colon X^{tor}\to X^{bb}$. 
No choice of fan is required: it is canonical. 
Explicitly, for each $I$, we consider the punctured disc bundle 
${\D}/U(I)_{{\Z}}$ over $H(I)$ and take the partial compactification, 
say $\overline{{\D}/U(I)_{{\Z}}}$, 
by filling in the origins. 
Then $X^{tor}$ is obtained by gluing $Y$ and 
$\overline{{\D}/U(I)_{{\Z}}}/V(I)_{{\Z}}$ around each cusp $p_{I}$. 
By construction 
the boundary divisor $\Delta(I)=\pi^{-1}(p_I)$ over $p_{I}$ 
is canonically identified with $H(I)/V(I)_{{\Z}}$. 
This is a torsor for the abelian variety $A(I)$ (with no specified origin).

\subsection{Special boundary points}\label{ssec: special point}

Let $I\subset V$ be an isotropic $K$-line. 
The affine space $H(I)={\proj}(V/I)_{{\C}}-{\proj}(I^{\perp}/I)_{{\C}}$ 
is naturally defined over $K$. 
A $K$-rational point of $H(I)$ can be written as 
$[W]=[W_{K}\otimes_{K}{\C}]$ 
for a $K$-plane $W_K\subset V$ containing $I$. 
Since the Hermitian form on $W_K$ is nondegenerate, 
we can write $W_K=\langle I, J \rangle$ 
for another isotropic $K$-line $J\subset V$. 
Hence ${\proj}W$ is the line joining the two rational boundary points $p_I$ and $p_J$. 

If we choose one $K$-rational point as the base point of $H(I)$ and identify $H(I)\simeq V(I)_{{\C}}$ accordingly, 
the set of all $K$-rational points of $H(I)$ is identified with $V(I)\subset V(I)_{{\C}}$. 
The translation by $V(I)_{{\Z}}$ preserves this set. 

\begin{definition}\label{def: special point}
A point of the boundary divisor $\Delta(I)=H(I)/V(I)_{{\Z}}$ is called a \textit{special point} 
if it is the image of a $K$-rational point of $H(I)$. 
\end{definition}

If we choose one special point of $\Delta(I)$ as the origin and identify $\Delta(I)\simeq A(I)$ accordingly, 
the set of all special points of $\Delta(I)$ is identified with the set of torsion points of the abelian variety $A(I)$. 
For $\gamma\in \Gamma$, the induced map 
$\Delta(I)\to \Delta(\gamma I)$ sends special points of $\Delta(I)$ to those of $\Delta(\gamma I)$. 

We have the following geometric characterization of special boundary points. 
We take another isotropic $K$-line $J\ne I \subset V$ 
and put $W=\langle I, J \rangle_{{\C}}$. 
The $1$-dimensional ball ${\D}\cap {\proj}W\subset {\D}$ 
joining $p_I$ and $p_J$ defines a congruence modular curve $Y_W$ and 
a morphism $Y_W\to Y$. 
Let $X_W$ be the compactification of $Y_{W}$. 
Note that $I$ also defines a cusp of $X_W$. 
Since $X_W$ is $1$-dimensional and $X^{tor}$ is compact, 
$Y_W\to Y$ extends to a morphism $X_W\to X^{tor}$. 
Recalling that ${\D}\cap {\proj}W$ is just the $\phi$-fiber over $[W]\in H(I)$, 
we find that the image of the $I$-cusp of $X_W$ in $X^{tor}$ is nothing but the point $[W]\in \Delta(I)$. 
Since every $K$-rational point of $H(I)$ can be written as 
$[\langle I, J \rangle_{{\C}}]$ for some isotropic $K$-line $J\subset V$, 
we obtain the following. 
 
\begin{lemma}\label{lem: special point}
A point of $\Delta(I)$ is special if and only if 
it is the image of the $I$-cusp of $X_W$ by $X_W \to X^{tor}$  
for some $1$-dimensional ball ${\D}\cap {\proj}W$ 
joining $p_I$ and another rational boundary point. 
\end{lemma}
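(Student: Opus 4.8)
The plan is to read the biconditional as a dictionary between two parametrizations of the same points of the boundary divisor $\Delta(I)$, and to note that the geometric half of this dictionary is already supplied in the paragraph preceding the statement. There it is shown that for an isotropic $K$-line $J\neq I$, with $W=\langle I, J\rangle_{{\C}}$, the image of the $I$-cusp of $X_W$ under $X_W\to X^{tor}$ is exactly the point $[W]\in \Delta(I)$, i.e.\ the image of $[W]\in H(I)$ under the quotient map $H(I)\to \Delta(I)=H(I)/V(I)_{{\Z}}$. Granting this, the proof reduces to the purely algebraic task of identifying the points $[W]$ that arise from such balls with the images of the $K$-rational points of $H(I)$, which by Definition \ref{def: special point} are the special points.

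For the implication from special points to balls, I would begin with a special point, that is, the image of a $K$-rational point $[W]\in H(I)$, and write $[W]=[W_K\otimes_K {\C}]$ for a $K$-plane $W_K\subset V$ containing $I$. The condition $[W]\in H(I)$ means $W\not\subset I^{\perp}$, which forces the Hermitian form on $W_K$ to be nondegenerate of signature $(1,1)$. The key algebraic point is then that a nondegenerate rank-two Hermitian $K$-space containing the isotropic line $I$ is a hyperbolic plane, and so contains exactly one further isotropic $K$-line $J$; this yields $W_K=\langle I, J\rangle$. The ball ${\D}\cap {\proj}W$ joins $p_I$ and $p_J$, and by the identification above its $I$-cusp maps to $[W]$, recovering the given special point.

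The reverse implication is then immediate: starting from a ball ${\D}\cap {\proj}W$ joining $p_I$ and another rational boundary point $p_J$, the line $J$ is isotropic and $W_K=\langle I, J\rangle$ is a $K$-plane containing $I$; nonemptiness of the ball gives signature $(1,1)$, hence $W\not\subset I^{\perp}$, so $[W]$ is a $K$-rational point of $H(I)$ whose image is the point in question. I do not expect a serious obstacle here, since the hard geometric step is already done; the only point demanding a little care is the hyperbolic-plane fact used in the forward direction (a nondegenerate Hermitian plane containing an isotropic line splits off a second isotropic line), together with the routine check that membership in $H(I)$ is exactly the nondegeneracy condition on $W_K$.
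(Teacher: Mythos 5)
Your proposal is correct and takes essentially the same route as the paper: the paper likewise combines the geometric identification, established in the paragraph before the lemma, of the image of the $I$-cusp of $X_W$ with the point $[W]\in\Delta(I)$ (via the Siegel domain realization, ${\D}\cap{\proj}W$ being the $\phi$-fiber over $[W]$) with the algebraic fact that every $K$-rational point of $H(I)$ equals $[\langle I,J\rangle_{\C}]$ for some isotropic $K$-line $J\neq I$, and your write-up merely fills in the nondegeneracy and hyperbolic-plane details that the paper asserts without proof. One inessential slip: a nondegenerate Hermitian plane over $K$ containing an isotropic line contains \emph{infinitely many} isotropic $K$-lines (in a hyperbolic basis, isotropy of $ae+bf$ is the $\Q$-rational condition $\mathrm{Tr}_{K/\Q}(a\bar{b})=0$, not $a=0$ or $b=0$), so ``exactly one further isotropic $K$-line'' is false --- but your argument only uses the existence of one such $J$, so nothing breaks.
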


\subsection{Proof of Theorem \ref{thm: Picard}}\label{ssec: proof Picard}

Now we can prove Theorem \ref{thm: Picard}. 
Let $P, Q$ be two special boundary points of $X^{tor}$. 
We want to show that $[P]-[Q]$ is torsion in ${\CHtor}$. 

We first consider the case $\pi(P)=\pi(Q)$, say $p_I$. 
We take $Q$ to be the base point of $\Delta(I)$ and identify $\Delta(I)=A(I)$ accordingly. 
Since $A(I)$ is isogenous to a self product of a CM elliptic curve $E$, 
there exists a homomorphism $f\colon E\to A(I)$ of abelian varieties such that 
$f(o_E)=Q$ and $f(p)=P$ for a point $p$ of $E$ of finite order. 
Since the map 
$E\to {\rm Pic}^{0}(E)$, $x\mapsto [x]-[o_E]$, 
is an isomorphism of abelian groups,
$[p]-[o_E]\in {\rm Pic}^0(E)$ is torsion. 
Thus we find that 
$[P]-[Q]=f_{\ast}([p]-[o_E])$ 
is torsion in ${\CHtor}$. 

Next let $\pi(P)\ne \pi(Q)$. 
Let $\pi(P)=p_I$ and $\pi(Q)=p_J$ 
for isotropic $K$-lines $I, J\subset V$ and 
consider the ${\C}$-plane $W=\langle I, J \rangle_{{\C}}$. 
As explained in \S \ref{ssec: special point}, 
the $1$-dimensional ball 
${\D}\cap {\proj}W$ defines a morphism 
$f\colon X_W\to X^{tor}$ from a compactified congruence modular curve $X_W$. 
Let $q_I$, $q_J$ be the cusps of $X_W$ defined by $I$, $J$ respectively.  
Let $P'=f(q_I)$ and $Q'=f(q_J)$. 
By the Manin-Drinfeld theorem for $X_W$, 
we see that $[P']-[Q']$ is torsion in ${\CHtor}$. 
By Lemma \ref{lem: special point}, $P'$ and $Q'$ are special points in $\Delta(I)$ and $\Delta(J)$ respectively. 
Therefore $[P]-[P']$ and $[Q]-[Q']$ are torsion by the preceding case. 
Thus $[P]-[Q]$ is torsion in ${\CHtor}$. 
Theorem \ref{thm: Picard} is proved. 
\qed  

\vspace{0.5cm} 

By Theorem \ref{thm: Picard}, the \textit{rational} Chow class of special boundary points is uniquely determined. 
%in the rational Chow group ${\CHtorQ}$. 
By Lemma \ref{lem: special point}, 
every intersection point of a modular curve $X_{W}$ and a boundary divisor $\Delta(I)$ 
is a special point. 
Therefore 

\begin{corollary}\label{cor: Chow class special point}
The intersection $0$-cycle of any modular curve and any boundary divisor in the rational Chow group ${\CHtorQ}$ 
is contained in the span of the rational Chow class of special boundary points. 
\end{corollary}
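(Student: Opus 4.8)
The plan is to deduce Corollary~\ref{cor: Chow class special point} directly from Lemma~\ref{lem: special point} together with Theorem~\ref{thm: Picard}. The statement asserts that the intersection $0$-cycle of any modular curve $X_{W}$ with any boundary divisor $\Delta(I)$ lies in the one-dimensional $\mathbb{Q}$-subspace of ${\CHtorQ}$ spanned by the (well-defined rational) Chow class of special boundary points; call this class $\xi$. First I would record why $\xi$ is a single well-defined element of ${\CHtorQ}$: by Theorem~\ref{thm: Picard} any two special boundary points $P,Q$ satisfy $[P]-[Q]$ torsion, hence $[P]=[Q]$ in ${\CHtorQ}$, so all special points share a common rational class $\xi$.

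Next I would analyze the intersection cycle itself. The modular curve is the image of $f\colon X_{W}\to X^{tor}$ attached to a $1$-dimensional ball ${\D}\cap {\proj}W$ joining two rational boundary points, as in \S\ref{ssec: special point}. Its intersection with the boundary divisor $\Delta(I)$ is supported on the finitely many points where the (proper) curve $f(X_{W})$ meets $\Delta(I)$, and each such point is the image of a cusp of $X_{W}$ lying over $p_I$. By Lemma~\ref{lem: special point}, every such image point is a \emph{special} point of $\Delta(I)$. Therefore the intersection $0$-cycle is a $\mathbb{Z}$-linear (hence $\mathbb{Q}$-linear, after pushing to ${\CHtorQ}$) combination of special boundary points. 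Since each special point has rational Chow class equal to $\xi$, the whole intersection cycle has class a rational multiple of $\xi$, i.e. lies in the span of $\xi$. This is exactly the assertion.

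The only genuine content beyond bookkeeping is the identification of the intersection support with images of cusps, and I would make that precise as follows. Since ${\G}$ is neat, $X^{tor}$ is smooth and $\Delta(I)$ is a smooth divisor; the curve $f(X_{W})$ meets $\Delta(I)$ exactly at the images of the boundary (cusp) points of $X_{W}$ that map into $\Delta(I)$, because over the open part $Y$ the curve stays inside $Y$ and cannot touch the boundary. Thus set-theoretically the intersection is a finite set of special points, and after passing to rational coefficients the precise intersection multiplicities are irrelevant: whatever positive integer weights occur, the class is still a rational multiple of $\xi$. I expect the main (and only) subtle point to be ensuring that \emph{every} component of $X^{tor}\cap\Delta(I)$ contributing to the intersection product is genuinely one of the cusp-images covered by Lemma~\ref{lem: special point}, rather than some excess-intersection or embedded contribution; but because $X_{W}$ is a curve meeting the divisor $\Delta(I)$ properly in the smooth ambient space, the intersection is a transverse $0$-cycle supported on those cusp-images, so no such pathology arises. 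Hence the corollary follows.
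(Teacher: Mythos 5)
Your proposal is correct and follows essentially the same route as the paper: Theorem~\ref{thm: Picard} makes the rational Chow class of special points well defined, and Lemma~\ref{lem: special point} (together with the fact that a compactified modular curve meets the boundary only at images of its cusps) shows the intersection $0$-cycle is supported on special points, so its class is a rational multiple of that fixed class. Your extra remarks on properness of the intersection and irrelevance of multiplicities over $\mathbb{Q}$ are just careful bookkeeping of what the paper leaves implicit.
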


This holds even in the integral Chow group ${\CHtor}$ 
when the first Betti number of $X^{tor}$ happens to be zero, 
by the Roitman theorem \cite{Ro2}.

Of course toroidal compactification can be constructed even when ${\G}$ is not necessarily neat. 
In that general case, the boundary divisor over a cusp $p_{I}$ is the quotient of 
$H(I)/V(I)_{{\Z}}$ by the finite abelian group $({\G}\cap N(I)_{{\Q}})/W(I)_{{\Z}}$. 
Special boundary points can be defined similarly, as the image of $K$-rational points of $H(I)$. 
Obviously Theorem \ref{thm: Picard} still holds in this case.

\begin{example}\label{K3 II}
By Holzapfel \cite{Ho}, there is an example of ${\G}$ in $n=2$ 
such that the minimal desingularization of $X^{tor}$ is a $K3$ surface $X_{K3}$ (of Picard number $20$ and discriminant $3$). 
By Roitman's theorem for $X_{K3}$, any two special boundary points in $X_{K3}$ are rationally equivalent. 
Then the Chow class of these points coincides with the Beauville-Voisin class of $X_{K3}$. 
This can be seen either by considering intersection cycle of modular curve and boundary curve, 
or from the fact that the boundary contains a rational component. 
\end{example}

\subsection{Cuspidal Chow class group}\label{ssec: cusp group}

Theorem \ref{thm: Picard} motivates to consider the following Chow-theoretic analogue of 
cuspidal divisor class groups (cf.~\cite{KL}) for Picard modular varieties. 
(A similar group is trivial for other modular varieties as in Theorem \ref{main}.) 
%(which do not survive for other modular varieties as in Theorem \ref{main}).  

Let $\mathscr{C}$ be the subgroup of ${\CHtor}$ generated by 
the differences $[P]-[Q]$ of two special boundary points. 
By Theorem \ref{thm: Picard}, $\mathscr{C}$ is a torsion group. 
In order to describe $\mathscr{C}$ more explicitly, 
let $X^{tor}-Y=\Delta_1+\cdots + \Delta_N$ be the irreducible decomposition of the boundary. 
We choose a special point $P_i\in \Delta_i$ for each $i$ and  
let $\alpha_i\colon X^{tor}\to \textrm{Alb}(X^{tor})$ be the Albanese map with base point $P_i$. 
Then $A_i=\alpha_i(\Delta_i)$ is a sub abelian variety of $\textrm{Alb}(X^{tor})$. 
Let $(A_i)_{tor}$ be the torsion part of $A_i$. 
By Roitman's theorem \cite{Ro2}, the Albanese map induces an isomorphism 
\begin{equation*}
\mathscr{C} \to \langle \: (A_i)_{tor}+\alpha_1(P_i) \: \rangle_{i} \; \; \subset \; \textrm{Alb}(X^{tor}). 
\end{equation*}
Therefore, to understand $\mathscr{C}$ is essentially reduced to 
understanding the configuration of the shifted sub abelian varieties 
$A_i+\alpha_1(P_i)$ in $\textrm{Alb}(X^{tor})$. 
Note that the subgroup 
$\langle (A_i)_{tor}+\alpha_1(P_i)  \rangle_{i}$ 
of $\textrm{Alb}(X^{tor})$ can also be written as   
$(\sum_{i} A_i)_{tor} +\langle \alpha_1(P_i) \rangle_i$. 
Thus the sub abelian variety $\sum_{i} A_i$ of $\textrm{Alb}(X^{tor})$ gives the divisible part of $\mathscr{C}$ 
(isomorphic to $({\Q}/{\Z})^a$), and the subgroup of $\textrm{Alb}(X^{tor})/\sum_{i}A_{i}$ 
generated by the torsion points $\alpha_{1}(P_i)$, $1\leq i \leq N$, gives the finite abelian part of $\mathscr{C}$.


\begin{thebibliography}{99}

\bibitem{AMRT}Ash, A.; Mumford, D.; Rapoport, M.; Tai, Y. 
\textit{Smooth compactifications of locally symmetric varieties.} 2nd edition. 
Cambridge Univ. Press, 2010. 

\bibitem{BB}Baily, W. L., Jr.; Borel, A.
\textit{Compactification of arithmetic quotients of bounded symmetric domains.} 
Ann. of Math. (2) \textbf{84} (1966), 442--528. 

\bibitem{BLS}Bass, H.; Lazard, M.; Serre, J.-P. 
\textit{Sous-groupes d'indices finis dans ${\rm SL}(n,{\Z})$.} 
Bull. Amer. Math. Soc. \textbf{70} (1964) 385--392. 

\bibitem{BV}Beauville, A.; Voisin, C. 
\textit{On the Chow ring of a $K3$ surface.}
J. Alg. Geom. \textbf{13} (2004), 417--426. 

\bibitem{BLMM}Bergeron, N.; Li, Z.; Millson, J.; Moeglin, C. 
\textit{The Noether-Lefschetz conjecture and generalizations.} 
Invent. Math. \textbf{208} (2017), no. 2, 501--552. 

%\bibitem{dCS}Di Cerbo, L.; Stover, M. 
%\textit{Classification and arithmeticity of toroidal compactifications with $3\bar{c}_2=\bar{c}_1^2=3$.} 
%Geom. Topology \textbf{22} (2018), 2465--2510. 

\bibitem{Dr}Drinfeld, V.~G. 
\textit{Two theorems on modular curves.} 
Funct. Anal. Appl. \textbf{7} (1973), no.2, 155--156. 

%\bibitem{FC} Faltings, G.; Chai, C. 
%\textit{Degeneration of abelian varieties.} 
%Springer, 1990. 

\bibitem{vdG}van der Geer, G. 
\textit{Hilbert modular surfaces.} 
Springer, 1988.

%\bibitem{GHS}Gritsenko, V.; Hulek, K.; Sankaran, G. K. 
%\textit{Abelianisation of orthogonal groups and the fundamental group of modular varieties.} 
%J. Algebra \textbf{322} (2009), no. 2, 463--478.

\bibitem{Ha}Harris, M. 
\textit{Functorial properties of toroidal compactifications of locally symmetric varieties.} 
Proc. London Math. Soc. \textbf{59} (1989), 1--22. 

\bibitem{Hi}Hirzebruch, F. 
\textit{Chern numbers of algebraic surfaces: an example.} 
Math. Ann. \textbf{266} (1984), no. 3, 351--356. 
 
\bibitem{HW}Hoffman, J.W.; Weintraub, S. H. 
\textit{The Siegel modular variety of degree two and level three.} 
Trans. Amer. Math. Soc. \textbf{353} (2000), 3267--3305. 

\bibitem{Ho}Holzapfel, R.P. 
\textit{Chern numbers of algebraic surfaces -- Hirzebruch's examples are Picard modular surfaces.} 
Math. Nachr. \textbf{126} (1986), 255--273. 

%\bibitem{Ko}Kond\=o. S. 
%\textit{On the Albanese variety of the moduli space of polarized K3 surfaces.} 
%Invent. Math. \textbf{91} (1988), no. 3, 587--593. 

\bibitem{KL}Kubert, D.; Lang, S. 
\textit{Modular units.} 
Springer, 1981. 

\bibitem{Kr}Krieg, A. 
\textit{Modular forms on half-spaces of quaternions.}  
Lecture Notes in Math. \textbf{1143}, Springer, 1985. 

\bibitem{Ma}Ma, S. 
\textit{Rational equivalence of cusps.} 
Ann. K-Theory \textbf{5} (2020), no.3, 395--410. 

\bibitem{Manin}Manin, J.~I. 
\textit{Parabolic points and zeta functions of modular curves.} 
Math. USSR Izv. \textbf{6} (1972), no.1, 19--64. 

\bibitem{Margu}Margulis, G. A. 
\textit{Discrete subgroups of semisimple Lie groups.}  
Springer, 1991. 

\bibitem{Me}Mennicke, J. 
\textit{Zur Theorie der Siegelschen Modulgruppe.} 
Math. Ann. \textbf{159} (1965), 115--129. 

%\bibitem{Ro1}Roitman, A.~A. 
%\textit{Rational equivalence of zero-dimensional cycles.} 
%Math. USSR-Sb. \textbf{18} (1974), 571--588. 

\bibitem{PS}Piatetskii-Shapiro, I. 
\textit{Geometry of classical domains and the theory of automorphic functions.}
Gordon and Breach, 1969. 

\bibitem{Ro2}Roitman, A.~A. 
\textit{The torsion of the group of 0-cycles modulo rational equivalence.} 
Ann. of Math. (2) \textbf{111} (1980), no. 3, 553--569. 

\bibitem{Sa}Sankaran, G.~K. 
\textit{Fundamental group of locally symmetric varieties.} 
Manuscripta Math. \textbf{90} (1996), no. 1, 39--48. 

\bibitem{Se} Serre, J.-P. 
\textit{Le probl\`eme des groupes de congruence pour SL2.} 
Ann. of Math. (2) \textbf{92} (1970), 489--527. 

%\bibitem{Shimura}Shimura, G. 
%\textit{Automorphic forms and the periods of abelian varieties.} 
%J. Math. Soc. Japan \textbf{31} (1979), no. 3, 561--592. 

\bibitem{Shioda}Shioda, T. 
\textit{On elliptic modular surfaces.} 
J. Math. Soc. Japan \textbf{24} (1972), 20--59. 

\bibitem{We}Weissauer, R. 
\textit{Vektorwertige Siegelsche Modulformen kleinen Gewichtes.}
J. Reine Angew. Math. \textbf{343} (1983), 184--202. 

\end{thebibliography}
\end{document}